\newtheorem{theorem}{Theorem}[section]
\newtheorem{definition}{Definition}[section]
\newtheorem{proposition}[theorem]{Proposition}
\newtheorem{lemma}[theorem]{Lemma}
\newtheorem{corollary}[theorem]{Corollary}
\newtheorem{claim}{Claim}
\newtheorem*{question}{Question}
 \def\NN{{\mathbb N}} 
 \def\RR{{\mathbb R}}
\def \G{\Gamma}
\def \D{\Delta}
\def\G{{\mathcal G}}
\begin{document}

\title[Stable manifolds]{$C^1$ Pesin (un)stable manifolds without domination}

\author[Yongluo Cao]{Yongluo Cao}

\address{Department of Mathematics, Soochow University, Suzhou 215006, Jiangsu, China.}
\address{Center for Dynamical Systems and Differential Equation, Soochow University, Suzhou 215006, Jiangsu, China}

\email{\href{mailto:ylcao@suda.edu.cn}{ylcao@suda.edu.cn}}
%
\author[Zeya Mi]{Zeya Mi}

\address{School of Mathematics and Statistics,
Nanjing University of Information Science and Technology, Nanjing 210044, China}
\email{\href{mailto:mizeya@163.com}{mizeya@163.com}}

\author[Rui Zou]{Rui Zou}
\address{School of Mathematics and Statistics,
Nanjing University of Information Science and Technology, Nanjing 210044, China}
\email{\href{zourui@nuist.edu.cn}{zourui@nuist.edu.cn}}
%
%
\thanks{Z.Mi is the corresponding author. We are partially supported by National Key R\&D Program of China(2022YFA1007800). Y. Cao was partially supported by NSFC 12371194; Z. Mi was partially supported by NSFC 12271260; R. Zou was partially supported by NSFC (12471185, 12271386). }


\date{\today}

\keywords{Stable manifold, non-uniform hyperbolicity, Lyapunov exponents, dominated splitting}
\subjclass[2020]{37D05, 37D10, 37D25, 37D30}

\begin{abstract}
For $C^1$ diffeomorphisms with continuous invariant splitting without domination, we prove the existence of  (un)stable manifolds under the hyperbolicity of invariant measures.
\end{abstract}

\maketitle


\section{Introduction}

The stable manifold theorem is one of the key tools in studying hyperbolic behavior on dynamical systems. It was first established in $C^1$ uniform hyperbolic systems (see e.g. \cite{HPS77, BS}), 
which asserts the existence of stable/unstable manifolds tangent to the uniformly contracted/expanded sub-bundles, respectively. 
This has been developed in $C^{1+\alpha}(\alpha>0)$ non-uniformly hyperbolic systems by Pesin and others (see e.g. \cite{FY,K80,P76,P77,R79}), known as the Pesin's stable manifold theorem. 
It is important to investigate whether the $C^{1+\alpha}$ regularity assumption is essential or not.

Pugh \cite{P84} built a  $C^1$ diffeomorphism which gives a counterexample to Pesin's stable manifold theorem.
More recently, Bonatti-Crovisier-Shinohara \cite{BC13} showed that the non-existence of (un)stable manifolds is a generic phenomenon in the $C^1$ category.
However, under the existence of domination on Oseledec's splitting of the hyperbolic (without zero Lyapunov exponents) measure, one knows that the Pesin's stable manifold holds true. This was announced by Ma\~n\'e \cite[Page 1271]{M84} in his 1982 ICM's report, and was made precise by Abdenur-Bonatti-Crovisier\cite[\S 8]{ABC} and also by Avila-Bochi \cite[Theorem 4.7]{ab12} with different approaches.


\medskip
In the present paper, we remove the domination assumption in \cite{ABC, ab12}, and show that the Pesin's stable manifold theorem remains valid for $C^1$ diffeomorphisms admitting continuous invariant splitting of hyperbolic measures.

\medskip

Let $M$ be a compact Riemannian manifold without boundary. Let $d$ denote the distance on $M$ induced by its Riemannian metric. Denote by ${\rm Diff}^1(M)$ the space of $C^1$ diffeomorphisms endowed with the usual $C^1$-topology. Let $f\in {\rm Diff}^1(M)$ and $\mu$ be an $f$-invariant Borel probability measure. By Oseledec's theorem \cite{O}, 
for $\mu$-almost every point $x \in M$, for every $v\in T_xM\setminus \{0\}$, there exists the limit
$$
\chi(x,v)=\lim_{n\to \pm \infty}\frac{1}{n}\log \|Df^n(x)v\|,
$$
called the Lyapunov exponents of $x$ at $v$.

A   stable set at a point $x\in M$ is given by
$$
W^s(x)=\left\{y\in M: \lim_{n\to \infty}d(f^nx,f^ny)=0\right\},
$$
and analogously for the unstable set $W^u(x)$, replace $f$ by $f^{-1}$. We call $W^s(x)$/$W^u(x)$ a (global) \emph{  stable/unstable manifold} of $x$ if it is an injectively immersed sub-manifold.
The Pesin's stable manifold theorem asserts that for any hyperbolic invariant measure,  for $\mu$-almost every point $x$, there exist a   stable manifold and an   unstable manifold corresponding to negative Lyapunov exponents and positive Lyapunov exponents with complementary dimensions.


Now we state our main result as follows. More general versions will be presented in Section \ref{main}.

\begin{theorem}\label{m}
Let $f\in {\rm Diff}^1(M)$ with a continuous invariant splitting $TM=E\oplus F$. 
Assume that $\mu$ is an invariant measure whose Lyapunov exponents along $E$ are all positive and along $F$ are all negative.

Then, for $\mu$-almost every point $x$, there exist an unstable manifold and a   stable manifold tangent to $E$ and $F$ respectively.
\end{theorem}

%
%
%

The Pesin's stable manifold theorem was carried out by Pliss \cite{Pliss} for $C^1$ diffeomorphisms when all Lyapunov exponents are strictly negative, using the celebrated Pliss's argument on hyperbolic times. It was then extended in \cite{ABC, ab12} to the case that the measure $\mu$ has positive and negative Lyapunov exponents simultaneously, whose Oseledec splitting $E\oplus F$ is \emph{uniformly dominated}, i.e., there are $C>0$ and $\lambda\in (0,1)$ such that for $\mu$-almost every $x\in M$ and each $n\in \NN$,
$$
\|Df^{-n}|_{E(f^n(x))}\| \cdot \|Df^n|_{F(x)}\|\le C\lambda^n.
$$
Theorem \ref{m} indicates that Pesin's stable manifold theorem does hold for diffeomorphisms which are only $C^1$, as long as the $C^{1+\alpha}$ hypothesis is replaced by a continuous invariant splitting hypothesis on the measure's Oseledec splitting. It extends the work of \cite{ABC,ab12} to $C^1$ diffeomorphisms without domination.

When the measure possesses both positive and negative Lyapunov exponents, different to \cite{Pliss}, one needs the domination on the measure's Oseledec splitting to control the geometry of iterated disks tangent to stable/unstable directions. Both the original $C^{1+\alpha}$  hypothesis and the uniform domination hypothesis in \cite{ABC,ab12} provide such a domination. In particular, the main tools in \cite{ABC,ab12} are the Plaque family theorem \cite{HPS77} and the cone argument, which are guaranteed by uniform domination. 

The main ingredient towards Theorem \ref{m} is to create domination. A weak version of domination called averaged domination arises from the gap of Lyapunov exponents on sub-bundles. Though, weaker than uniform domination, it ensures that one can control the geometry well of iterated disks at hyperbolic times. This will be sufficient to obtain the unstable manifolds by considering the limit of these iterated disks.

\section{Unstable manifolds from averaged domination}

Throughout this paper, let $M$ be a compact Riemannian manifold without boundary, and $f$ be a $C^1$ diffeomorphism on $M$. 

Given $x\in M$, a $C^1$-smooth embedded disk $W$ centered at $x$ is called a \emph{local stable manifold} at $x$ if there exist $C>0$ and $\lambda\in (0,1)$ such that for any $n\ge 1$, 
$$
d(f^n(y),f ^n(z))\le C\lambda^nd(y,z)
$$ 
whenever $y,z\in W$. Replacing $n$ with $-n$, one can define the \emph{local unstable manifold} similarly.


Denote by $m(A)=\inf_{\|v\|=1}\|Av\|=\|A^{-1}\|^{-1}$ the \emph{mini-norm} of linear isomorphism $A$.
Let $E$ and $F$ be  two sub-bundles. For $\lambda>1$, denote
$$
\D_{n}(\lambda,f,E,F)=\Big\{x\in M: \prod_{j=0}^{k-1}\frac{m\left(Df|_{E(f^jx)}\right)}{\left\|Df|_{F(f^jx)}\right\|}\ge \lambda^k, \quad \forall\, 1\le k \le n\Big\}.
$$
When $x\in \D_{n}(\lambda,f)$, we say that $(x,f^n(x))$ is $\lambda$-\emph{average dominated} associated to $(E,F)$.
We will also study the set 
\begin{equation}\label{def:delta}
\D(\lambda,f,E,F)=\bigcap_{n\ge 1}\D_{n}(\lambda,f,E,F).
\end{equation}

\begin{definition}\label{hdx}
Let $f\in {\rm Diff}^1(M)$ and $E, F$ be two continuous invariant sub-bundles of $TM$.
Given $\lambda_1>1,\lambda_2>1$, denote by $$\mathcal{HD}(x,\lambda_1,\lambda_2, f, E, F)$$ the set of times $n$ such that 
$x\in \D_{n}(\lambda_2,f,E,F)$ and $n$ is a $(\lambda_1,E)$-hyperbolic time for $x$ with respect to $f$, i.e.,
$$
\prod_{j=n-k}^{n-1}m\left(Df|_{E(f^jx)}\right)\ge \lambda_1^k, \quad 1\le k \le n.
$$
\end{definition}

We also write $\mathcal{HD}(x,\lambda_1,\lambda_2)$ when there is no ambiguity.

Let $f$ be a $C^1$ diffeomorphism with a continuous invariant splitting $TM=E\oplus F$.
Given $a\ge 0$, for every $x\in M$, define the $E$-direction \emph{cone of width} $a$ at $x$ as follows:
$$
\mathscr{C}^E_{a}(x)=\left\{v=v^E\oplus v^F\in E(x)\oplus F(x): \|v_F\|\le a \|v_E\|\right\}.
$$
We say a $C^1$ embedded sub-manifold $D$ is \emph{tangent to} $\mathscr{C}^E_{a}$ if it has dimension ${\rm dim}E$ and $T_xD\subset \mathscr{C}^E_{a}(x)$ for every $x\in D$. In particular, we will say $D$ is \emph{tangent to} $E$ everywhere if it is \emph{tangent to} $\mathscr{C}^E_{0}$. 

By applying the averaged domination together with the backward contraction property on hyperbolic times, one can conclude the following result for sub-manifold tangent to the cone-field. See \cite[Lemma 3.7, Proposition 3.8]{MCY18} for the proof.

\begin{lemma}\label{aver}
Let $f$ be a $C^1$ diffeomorphism with a continuous invariant splitting $TM=E\oplus F$.

For any $\lambda_1>\sigma_1>1,\lambda_2>\sigma_2>1$, there exist $a>0$ and $r>0$ such that if $n\in \mathcal{HD}(x,\lambda_1,\lambda_2)$ for some $x\in M$, then for any sub-manifold $D$ tangent to $\mathscr{C}^E_{a}$ with radius larger than $r$ around $x$, for every $y \in D$ satisfying $d(f^n (x), f^n (y))\le r$ we have
\begin{itemize}
\smallskip
\item $d (f^{k} (x), f^{k} (y) ) \le \sigma_1^{-n+k} d (f^n (x), f^n (y))$ for any $0 \le k \le n$;
\smallskip
\item
$d_{\mathcal{G}}(T_{f^k(y)}f^k(D), E(f^k(y))) \le  \sigma_2^{-k} \cdot a$ for any $0\le k \le n$.
\end{itemize}
Here $d_{\mathcal{G}}$ denotes the distance in the Grassmannian manifold of ${\rm dim}E$-dimensional subspaces in $TM$.
\end{lemma}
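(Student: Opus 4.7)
The plan is to prove the two conclusions together by a backward induction from time $n$, using the continuity of the splitting and of $Df$ to locally linearize $f$ in a small tube around the orbit of $x$, and then applying the average domination to control cones and the hyperbolic-time estimate to control lengths. First I would fix constants. Choose $\tau_i$ with $\sigma_i < \tau_i < \lambda_i$ ($i=1,2$). By uniform continuity of $E,F$ and of $Df$ on $M$, pick a radius $r_0$ and a cone width $a_0$ such that whenever $z,z'\in M$ with $d(z,z')\le r_0$ and $v\in\mathscr{C}^E_{a_0}(z')$,
\[
\|Df(z')\,v_E\|\ge \tau_1\sigma_1^{-1}m(Df|_{E(z)})\|v_E\|,\qquad \|Df(z')\,v_F\|\le \tau_2^{-1}\sigma_2\|Df|_{F(z)}\|\,\|v_F\|,
\]
and such that the slope of the image of the cone in the splitting at $f(z')$ is comparable to that in the splitting at $f(z)$. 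Fix $r$ and $a$ much smaller than $r_0,a_0$, to be shrunk below; the precise smallness will be determined by the local graph-transform argument.

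Next I would run a backward induction on $k=n,n-1,\ldots,0$ to establish simultaneously the two conclusions for $y\in D$ with $d(f^nx,f^ny)\le r$. The inductive hypothesis at step $k$ is that (i) $f^k(y)$ lies in an $r_0$-neighborhood of $f^k(x)$, (ii) $d(f^kx,f^ky)\le \sigma_1^{k-n}d(f^nx,f^ny)$, and (iii) $T_{f^k(y)}f^k(D)\subset\mathscr{C}^E_{\sigma_2^{-k}a}(f^k(y))$. At a hyperbolic time $n$, the cone estimate and the linearization above let me write $f^{-1}$ restricted to a disk tangent to the cone at $f^k(x)$ as a graph contraction, with backward contraction factor bounded by $\tau_1^{-1}/m(Df|_{E(f^{k-1}x)})^{-1}$ composed with an error coming from the $\mathscr{C}^E_{\sigma_2^{-k}a}$-tangency. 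Iterating gives the length bound by telescoping the hyperbolic-time product $\prod_{j=n-\ell}^{n-1} m(Df|_{E(f^jx)})^{-1}\le \lambda_1^{-\ell}$ together with the small gain $(\tau_1/\sigma_1)^{\ell}$ absorbed by the cone error; choosing $r$ small enough that this excursion never leaves $r_0$ closes the induction on (ii).

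For (iii) I would propagate the tangent space forward: if $T_{f^k(y)}f^k(D)$ is in $\mathscr{C}^E_{\sigma_2^{-k}a}$, then $T_{f^{k+1}(y)}f^{k+1}(D)=Df(f^k(y))\cdot T_{f^k(y)}f^k(D)$ lies in a cone of width
\[
\sigma_2^{-k}a\cdot\frac{\|Df|_{F(f^k(x))}\|}{m(Df|_{E(f^k(x))})}\cdot\left(\frac{\sigma_2}{\tau_2}\right)^{2}+O(\text{cone error}).
\]
Summing along the orbit up to any $k\le n$, the average-dominated condition $x\in\D_n(\lambda_2,f)$ gives
\[
\prod_{j=0}^{k-1}\frac{\|Df|_{F(f^jx)}\|}{m(Df|_{E(f^jx)})}\le \lambda_2^{-k},
\]
and with the gap $\tau_2<\lambda_2$ absorbing lower-order terms one obtains the cone width $\sigma_2^{-k}a$ as claimed. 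This is really the only delicate step: keeping the tangent-space drift under control so that the forward cone iterate closes on itself without needing a uniform dominated splitting.

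The step I expect to be the main obstacle is the simultaneous control of (ii) and (iii) when $a$ is not infinitesimal. Because $D$ is not tangent to $E$ but only to the $a$-cone, the length and cone estimates couple: the length bound uses the cone bound to compare $\|Df|_{T_{f^k(y)}f^k(D)}\|$ with $\|Df|_{E(f^k(x))}\|$, and the cone bound uses the length bound to compare the splitting at $f^k(y)$ with that at $f^k(x)$ via uniform continuity of $E,F$. The cleanest way to decouple is the chosen buffer between the $\lambda_i$ and the target $\sigma_i$: any errors accumulated from cone tilt and from the mismatch between $E(f^k(x))$ and $E(f^k(y))$ are absorbed by the gap $(\lambda_i/\tau_i)$, provided $a$ and $r$ are initially chosen small enough. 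Once these parameters are fixed, the induction closes and gives the two displayed conclusions.
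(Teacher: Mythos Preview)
The paper does not actually prove this lemma; it simply cites \cite[Lemma~3.7, Proposition~3.8]{MCY18}. So there is no in-paper argument to compare against, and your sketch should be judged on its own merits. The ingredients you list are exactly the right ones: uniform continuity of $Df$, $E$, $F$ to transfer estimates from $f^k(x)$ to $f^k(y)$; the average-domination product $\prod \|Df|_{F}\|/m(Df|_{E})\le\lambda_2^{-k}$ to contract the cone forward; the hyperbolic-time product $\prod m(Df|_{E})\ge\lambda_1^{k}$ to contract lengths backward; and buffer constants $\sigma_i<\tau_i<\lambda_i$ to absorb the comparison errors. This is the content of the argument in \cite{MCY18}.

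Where your outline does not close is the inductive structure. You declare a backward induction on $k=n,\ldots,0$ with hypothesis (iii) $T_{f^k(y)}f^k(D)\subset\mathscr{C}^E_{\sigma_2^{-k}a}$, but at the base $k=n$ this is the \emph{strongest} cone estimate and is not available; only the width-$a$ cone at $k=0$ is given. You then say you will propagate (iii) forward, which is the correct direction, but the forward cone step at time $j$ requires $d(f^j(x),f^j(y))\le r_0$, and that is precisely what the backward length estimate (ii) is supposed to deliver. As written the two inductions feed on each other and neither has a legitimate starting point. The standard way to break the circularity is a connectedness (open--closed) argument: on the closed set of $y\in D$ whose entire orbit segment stays in the $r_0$-tube around the orbit of $x$, the forward cone estimate holds, hence the backward length estimate holds, which forces the orbit into the strictly smaller $r$-tube; this makes the set relatively open in $\{y\in D:d(f^n(x),f^n(y))\le r\}$, so it coincides with the component of $x$. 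Alternatively one runs a single forward induction on graph transforms over $E$, tracking cone width and minimal expansion simultaneously. Your buffer mechanism is correct, but the scaffolding around it needs to be reorganized so that the induction actually initializes and closes.
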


Given a subset $A\subset \NN$, we define its \emph{lower density} and \emph{upper density} as follows:
$$
D_{L}(A):=\liminf_{n\to +\infty}\frac{\# (A\cap \{0,\cdots n-1\})}{n},
$$
$$
D_{U}(A):=\limsup_{n\to +\infty}\frac{\# (A\cap \{0,\cdots n-1\})}{n}.
$$

For an invariant measure $\mu$, we say a point $x\in M$ is $\mu$-\emph{generic} if 
$$
\frac{1}{n}\sum_{i=0}^{n-1}\delta_{f^i(x)}\to \mu \quad \textrm{as}~n\to +\infty.
$$
If $\mu$ is ergodic, by Birkhoff's ergodic theorem \cite[Theorem 1.14]{wa82}, we know that $\mu$-almost every point $x$ is $\mu$-generic. 

The next result provides a condition on some $\mu$-generic point for the ergodic measure that guarantees the existence of unstable manifolds almost everywhere.


\begin{theorem}\label{mg}
Let $f\in {\rm Diff}^1(M)$ with a continuous invariant splitting $TM=E\oplus F$.

If $\mu$ is an ergodic measure for which there exist $\lambda_1>1,\lambda_2>1$ and a $\mu$-generic point $x$ such that 
\begin{equation}\label{denassum}
D_U(\mathcal{HD}(x,\lambda_1,\lambda_2)>0,
\end{equation}
then, at $\mu$-almost every point $x$ there exists an   unstable manifold $W^{E,u}(x)$ tangent to $E$. 

Moreover, for every $\chi\in (1, \lambda_1)$ there exist Borel functions $T(x)$ and $\delta(x)$ such that for $\mu$-almost every $x\in M$, there is a local   unstable manifold $W^{E,u}_{\delta(x)}(x)$ of size $\delta(x)$ contained in the global   unstable manifold $W^{E,u}(x)$ such that for any $y\in W^{E,u}_{\delta(x)}(x)$ and $n\in \NN$,
$$
d(f^{-n}(x), f^{-n}(y))\le T(x)\chi^{-n}d(x,y).
$$
\end{theorem}

\begin{proof}
Let $\mu$ be an ergodic measure and take $x$ as $\mu$-generic point satisfying \eqref{denassum} for constants $\lambda_1>1,\lambda_2>1$.
Thus, there exist $\theta\in (0,1)$ and a subsequence $\{n_k\}_{k\in \NN}$ such that 
\begin{equation}\label{kk}
\frac{1}{n_k}\#\left(\mathcal{HD}(x,\lambda_1,\lambda_2)\cap\{0,\cdots,n_k-1\}\right)>\theta,\quad \forall\, k\in \NN.
\end{equation}
Write for simplicity the measures
$$
\mu_{n_k}=\frac{1}{n_k}\sum_{j=0}^{n_k-1}\delta_{f^j(x)}, \quad k\in \NN.
$$
Hence, we have $\mu_{n_k}\to \mu$ as $k\to +\infty$ by definition.
For each $j\ge 0$, consider 
$$
H_j=H_j(x)=
\begin{cases}
\{f^j(x)\}, & j\in \mathcal{HD}(x,\lambda_1,\lambda_2);\\
 ~~~~\emptyset,&~~{\rm otherwise}.
\end{cases}
$$
Observe that 
$$
\frac{1}{n_k}\sum_{j=0}^{n_k-1}\delta_{f^j(x)}(H_j)=\frac{1}{n_k}\#\left(\mathcal{HD}(x,\lambda_1,\lambda_2)\cap \{0,\cdots,n_k-1\}\right).
$$
According to (\ref{kk}), one gets 
\begin{equation}\label{getheta}
\mu_{n_k}\left(\bigcup_{0\le j \le n_k-1}H_j\right)\ge \frac{1}{n_k}\sum_{j=0}^{n_k-1}\delta_{f^j(x)}(H_j)>\theta,\quad \forall \, k\in \NN.
\end{equation}
Consider $H_{\infty}$ as the limit set of $\{H_j\}_{j\ge 0}$, that is 
$$
H_{\infty}=\bigcap_{m\ge 1}\overline{\bigcup_{j\ge m}H_j}.
$$
We have the following claim:

\begin{claim}\label{theta}
$$
\mu(H_{\infty})\ge \limsup_{k\to +\infty}\mu_{n_k}\left(\bigcup_{0\le j \le n_k-1}H_j\right)\ge \theta.
$$
\end{claim}
\begin{proof}
Since $\{\overline{\cup_{j\ge m}H_j}\}_{m\in \NN}$ is decreasing, we get
\begin{equation}\label{ineq2}
\mu(H_{\infty})=\lim_{m\to +\infty}\mu\left(\overline{\bigcup_{j\ge m}H_j}\right).
\end{equation}
By the convergence of $\mu_{n_k}\to \mu$, for every $m\in \NN$ we see that
\begin{equation}\label{t}
\mu\left(\overline{\bigcup_{j\ge m}H_j}\right)\ge \limsup_{k\to +\infty}\mu_{n_k}\left(\overline{\bigcup_{j\ge m}H_j}\right)\ge \limsup_{k\to +\infty}\mu_{n_k}\left(\bigcup_{j\ge m}H_j\right).
\end{equation}
Observe that for every $k\in \NN$ and every $m\in\NN$,
$$
\mu_{n_k}\left(\bigcup_{0\le j \le n_k-1}H_j\right)\le \mu_{n_k}\left(\bigcup_{0\le j \le m-1}H_j\right)+\mu_{n_k}\left(\bigcup_{j\ge m }H_j\right).
$$
The definition of $\{H_j\}_{j\ge 0}$ gives that
$$
\lim_{k\to +\infty}\mu_{n_k}\left(\bigcup_{0\le j \le m-1}H_j\right)=0,\quad \forall \, m\in \NN.
$$
Thus, for every $m\in \NN$ we have 
\begin{equation}\label{ineq1}
\limsup_{k\to +\infty}\mu_{n_k}\left(\bigcup_{j\ge m }H_j\right)\ge \limsup_{k\to +\infty}\mu_{n_k}\left(\bigcup_{0\le j \le n_k-1}H_j\right).
\end{equation}
Combining \eqref{ineq2}, \eqref{t}, \eqref{ineq1} and \eqref{getheta} we conclude that
$$
\mu(H_{\infty})\ge \limsup_{k\to +\infty}\mu_{n_k}\left(\bigcup_{0\le j \le n_k-1}H_j\right)\ge \theta.
$$
\end{proof}

Now we show that there exists a local unstable disk centered at any $w\in H_{\infty}$.
Consider any $\chi \in (1,\lambda_1)$ and $\sigma\in (1,\lambda_2)$.
Let $a>0$, $r>0$ be the constants given by Lemma \ref{aver}. Take $D_0$ as a $C^1$ disk tangent to $\mathscr{C}^E_{a}$ centered at $x$ with radius $r$. From Lemma \ref{aver}, we know that if $n\in \mathcal{HD}(x,\lambda_1,\lambda_2)$, then 
\begin{enumerate}
\item\label{F1} $f^n(D_0)$ contains a ball $D_{n}$ of radius $r$ centered at $f^n(x)$.
\smallskip
\item\label{F2} $d (f^{-k}(y), f^{-k}(z) ) \le \chi^{-k} d(y, z)$ for any $0 \le k \le n$ and $y,z\in D_n$.
\smallskip
\item\label{F3}
$d_{\G}\left(T_{f^{k}(y)}f^{k-n}(D_n), E(f^{k}(y))\right) \le  \sigma^{-k} \cdot a$ for any $0\le k \le n$ and $y\in f^{-n}(D_{n})$.
\end{enumerate}
We claim that:
\begin{claim}\label{hdn}
$\{D_{n}\}_{n\in \mathcal{HD}(x,\lambda_1,\lambda_2)}$ is uniformly equicontinuous in the $C^1$-topology.
\end{claim}

\begin{proof}
Let $\mathcal{HD}(x,\lambda_1,\lambda_2)=\{h_i\}_{i\ge 1}$ with $h_{i+1}>h_{i}$ for every $i\ge 1$. Fix any $\varepsilon>0$. Since $E$ is continuous, there is $\delta_1>0$ such that
\begin{equation}\label{jk}
d_{\G}(E(y),E(z))< \varepsilon/3
\end{equation}
whenever $d(y,z)< \delta_1$.
By the property (\ref{F3}), there exists $i(\varepsilon)\in \NN$ such that for any $i\ge i(\varepsilon)$, one has that 
\begin{equation}\label{mn}
d_{\G}(T_{y}(D_{h_i}), E(y)) \le  \sigma^{-h_i} \cdot a<\varepsilon/3, \quad \forall \, y\in D_{h_i}.
\end{equation}
By (\ref{jk}) and (\ref{mn}), we see that for every $i\ge i(\varepsilon)$ and $y,z\in D_{h_i}$ satisfying $d(y,z)<\delta_1$, 
\begin{eqnarray*}
d_{\G}(T_yD_{h_i}, T_zD_{h_i})&\le& d_{\G}(T_yD_{h_i}, E(y))+d_{\G}(E(y), E(z))\\
&+&d_{\G}(E(z), T_zD_{h_i})\\
&<&\varepsilon.
\end{eqnarray*}
On the other hand, as every $D_{h_i}$ is $C^1$, there is $\delta_2>0$ such that for every $1\le i \le i(\varepsilon)$, if $y,z\in D_{h_i}$ with $d(y,z)<\delta_2$, then
$
d_{\G}(T_yD_{h_i}, T_zD_{h_i})<\varepsilon.
$
Consequently, by taking $\delta=\min\{\delta_1,\delta_2\}$, we get for every $i$ that
$$
d_{\G}(T_yD_{h_i}, T_zD_{h_i})<\varepsilon
$$
whenever $d(y,z)<\delta$.
This shows the desired result. 
\end{proof}

For any $w\in H_{\infty}$, by construction there exists an increasing sequence $1\le t_1<t_2<\cdots<t_n<\cdots$ in $\mathcal{HD}(x,\lambda_1,\lambda_2)$ such that $f^{t_i}(x)\to w$ as $i\to +\infty$. Since $\{D_{n}\}_{n\in \mathcal{HD}(x,\lambda_1,\lambda_2)}$ is uniformly equicontinuous by Claim \ref{hdn}, according to the Ascoli-Arzela theorem, up to considering subsequences, there exists a $C^1$-disk $D^u(w,r)$ centered at $w$ of radius $r$ such that $D_{t_i}$ converges to $D^u(w,r)$ in $C^1$-topology. For any fixed $n\in \NN$, choose $i_0\in \NN$ large enough such that $t_i\ge n$ for every $i\ge i_0$. From the convergence of $D_{t_i}\to D^u(w,r)$, for any $y,z\in D^u(w,r)$, there exist 
$y_{t_i}, z_{t_i}\in D_{t_i}$ such that $y_{t_i}\to y$, $z_{t_i}\to z$. 
By property (\ref{F2}), it follows that 
$$
d\left(f^{-n}(y_{t_i}), f^{-n}(z_{t_i})\right) \le \chi^{-n} d (y_{t_i}, z_{t_i}), \quad \forall\, i\ge i_0.
$$
By passing to the limit as $i\to +\infty$, we get
$$
d(f^{-n}(y), f^{-n}(z) ) \le \chi^{-n} d (y, z).
$$
From the arbitrariness of $n\in \NN$ we know
$$
d(f^{-n}(y), f^{-n}(z) ) \le \chi^{-n} d (y, z),\quad \forall \, n\in\NN.
$$
In view of property (\ref{F3}) above, taking the limit as $i\to +\infty$ we obtain that $D^u(w,r)$ is tangent to $E$.
Writing $W^{E,u}_r(w)=D^u(w,r)$ for every $w\in H_{\infty}$, which is a local   unstable manifold tangent to $E$ of size $r$.

Set 
$$
\Lambda_0=H_{\infty},\quad \Lambda_k=f^k(\Lambda_0)\setminus \bigcup_{0\le j\le k-1}\Lambda_j, \quad k\ge 1.
$$
Put $C=\min_{z\in M}m\left(Df|_{E(z)}\right)$, and then define
$$
\delta(x)=\sum_{k=0}^{\infty}\mathbbm{1}_{\Lambda_k}(x)C^kr, \quad T(x)=\sum_{k=0}^{\infty}\mathbbm{1}_{\Lambda_k}(x) \chi^{-k}C^{-k},
$$
where $\mathbbm{1}_A(x)$ denotes the indicator function of the set $A$.
We see from definition that both $\delta(x)$ and $T(x)$ are Borel functions constant on each $\Lambda_k$.
For each $x\in \Lambda_k$, since $f^{-k}(x)\in \Lambda_0=H_{\infty}$, which admits the local   unstable manifold 
$W_r^{E,u}(f^{-k}(x))$ of size $r=\delta(f^{-k}(x))$. By a simple computation, we know $f^k(W_r^{E,u}(f^{-k}(x)))$ contains a ball of size $\delta(x)$ around $x$, denoted as $W^{E,u}_{\delta(x)}(x)$. Moreover, we have 
$$
d(f^{-n}(y), f^{-n}(z))\le T(x)\chi^{-n} d (y, z),\quad \forall \, n\in\NN,
$$
provided that $y,z\in W^{E,u}_{\delta(x)}(x)$. By Claim \ref{theta}, we know $\mu(\Lambda_0)=\mu(H_{\infty})\ge \theta$. Applying the ergodicity of $\mu$, we get
$$
\mu\left(\bigcup_{k\ge 0}\Lambda_k\right)=\mu\left(\bigcup_{k\ge 0}f^k(\Lambda_0)\right)=1.
$$
Therefore, we have shown that for $\mu$-almost every point $x\in M$, there exists a local   unstable manifold $W^{E,u}_{\delta(x)}(x)$. 

For $\mu$-almost every point $x\in M$, define
$$
W^{E,u}(x)=\bigcup_{n\ge 0}f^n\left(W^{E,u}_{\delta(f^{-n}(x))}(f^{-n}(x))\right).
$$
Now we show it is a global   unstable manifold at $x$. Observe first by construction that for every $y,z\in W^{E,u}(x)$, it holds that
$$
\limsup_{n\to +\infty}\frac{1}{n} \log d(f^{-n}(y), f^{-n}(z))\le -\log \chi<0.
$$
Thus it suffices to show it is an injectively immersed submanifold. 
Since $\mu(\Lambda_0)\ge \theta>0$, the ergodicity of $\mu$ implies that for $\mu$-almost every $x$, its backward iterates  $\{f^{-n}(x)\}$ enters $\Lambda_0$ infinitely many times, whose local   unstable manifolds have size $r$.  
Since the size of the backward iterates of local   unstable manifolds tends to zero for almost every point, for every $m\in \NN$ there exists $n\in \NN$ large enough such that $f^{-n}(x)\in \Lambda_0$ and 
$$
f^{-(n-k)}\left(W^{E,u}_{\delta(f^{-k}(x))}(f^{-k}(x))\right)\subset W_r^{E,u}(f^{-n}(x)), \quad 0\le k \le m.
$$
Thus, the union
$$
\bigcup_{0\le k\le m}f^k\left(W^{E,u}_{\delta(f^{-k}(x))}(f^{-k}(x))\right)
$$ 
is contained in the embedded manifold $f^n(W^{E,u}_{r}(f^{-n}(x)))$.
This indicates that $W^{E,u}(x)$ is an injectively immersed submanifold, which is a global   unstable manifold at $x$.
From the construction we know $W^{E,u}(x)$ is tangent to $E$.

\end{proof}

%
%
%
%
%
%
%

\section{Proof of Theorem \ref{m}}\label{main}

Let $f\in {\rm Diff}^1(M)$. For a $Df$-invariant sub-bundle $E\subset TM$ and an invariant measure $\mu$, for $\mu$-almost every $x\in M$, denote the minimal and maximal Lyapunov exponents along $E$ by 
$$
\chi_E^-(x)=\lim_{n\to +\infty}\frac{1}{n}\log m(Df^n|_{E(x)}), \quad \chi^+_E(x)=\lim_{n\to +\infty}\frac{1}{n}\log\|Df^n|_{E(x)}\|.
$$
Note that when $\mu$ is ergodic, $\chi_E^-(x)$ and $\chi_E^+(x)$ are constants at $\mu$-almost every $x\in M$, denoted them by $\chi_E^-(\mu)$ and $\chi_E^+(\mu)$, respectively.

Now we present several results on the existence of   (un)stable manifolds in different settings. We mention that due to the ergodic decomposition theorem \cite[Theorem 6.4]{Man87}, it suffices to consider their ergodic cases. 


\begin{theorem}\label{ra}
Let $f\in {\rm Diff}^1(M)$ with a continuous invariant splitting $TM=E\oplus F$. If $\mu$ is an ergodic measure such that
$$
\chi_E^-(\mu)>\max\{0, ~\chi_F^+(\mu)\},
$$
then at $\mu$-almost every point $x$ there exists an   unstable manifold $W^{E,u}(x)$ tangent to $E$. 

Moreover, for every $0<\varepsilon<\min\{\chi_E^-(\mu), \chi_E^-(\mu)-\chi_F^+(\mu)\}$ there exist Borel functions $\delta(x)$ and $T(x)$ such that for $\mu$-almost every $x\in M$, there is a local   unstable manifold $W^{E,u}_{\delta(x)}(x)$ of size $\delta(x)$ contained in the global   unstable manifold $W^{E,u}(x)$ such that for any $y\in W^{E,u}_{\delta(x)}(x)$ and $n\in \NN$,
$$
d(f^{-n}(x), f^{-n}(y))\le T(x){\rm e}^{-\chi_E^-(\mu)n}{\rm e}^{\varepsilon n}d(x,y).
$$
\end{theorem}

%

Notice that in Theorem \ref{ra}, we do not assume any hyperbolicity of $\mu$ along the sub-bundle $F$. 
By applying Theorem \ref{ra} to $f^{-1}$, one can conclude the existence of stable manifolds as follows:

\begin{theorem}\label{rb}
Let $f\in {\rm Diff}^1(M)$ with a continuous invariant splitting $TM=E\oplus F$. If $\mu$ is an ergodic measure such that
$$
\chi_F^+(\mu)<\min\{0, ~\chi_E^-(\mu)\},
$$
then at $\mu$-almost every point $x$ there exists a   stable manifold $W^{F,s}(x)$ tangent to $F$. 

Moreover, for every $0<\varepsilon<\min\{-\chi_F^+(\mu), \chi_E^-(\mu)-\chi_F^+(\mu)\}$ there exist Borel functions $\delta(x)$ and $T(x)$ such that for $\mu$-almost every $x\in M$, there is a local   stable manifold $W^{F,s}_{\delta(x)}(x)$ of size $\delta(x)$ contained in the global   stable manifold $W^{F,s}(x)$ such that for any $y\in W^{F,s}_{\delta(x)}(x)$ and $n\in \NN$,
$$
d(f^{n}(x), f^{n}(y))\le T(x){\rm e}^{\chi_F^+(\mu)n}{\rm e}^{\varepsilon n}d(x,y).
$$
\end{theorem}

Theorem \ref{m} is an immediate consequence of Theorem \ref{ra} and Theorem \ref{rb}.

We can also establish   (un)stable manifolds for $C^1$ diffeomorphisms possessing continuous invariant  splitting on tangent bundle with more than two sub-bundles. Here we provide the case of three sub-bundles, the more general case can be established similarly.

\begin{theorem}\label{rc}
Let $f\in {\rm Diff}^1(M)$ with a continuous invariant splitting 
$$
TM=E\oplus F\oplus G.
$$
Let $\mu$ be an ergodic measure whose Lyapunov exponents along these three sub-bundles can be separated, i.e.,
$$
\chi_{E}^{-}(\mu)>\chi_{F}^+(\mu)\ge \chi_{F}^{-}(\mu)>\chi_{G}^+(\mu).
$$

If $\chi_F^{-}(\mu)>0$, then at $\mu$-almost every $x$ there exist an   unstable manifold $W^{E\oplus F,u}(x)$ tangent to $E\oplus F$ and an   unstable manifold $W^{E,u}(x)\subset W^{E\oplus F,u}(x)$ tangent to $E$. Likewise, if $\chi_F^{+}(\mu)<0$, then at $\mu$-almost every $x$ there exists a   stable manifold $W^{F\oplus G,s}(x)$ tangent to $F\oplus G$ and an   unstable manifold $W^{G,s}(x)\subset W^{F\oplus G,s}(x)$ tangent to $G$.
\end{theorem}

The proofs of Theorem \ref{ra} and  Theorem \ref{rc} are presented in Subsection \ref{3.1}.

\medskip

The classical Pesin's stable manifold theorem asserts that the size of local Pesin  (un)stable manifolds vary sub-exponentially along orbits. This property was also established for local (un)stable manifolds constructed by Abdenur-Bonatti-Crovisier for $C^1$ diffeomorphisms with dominated splitting (\cite[Proposition 8.9]{ABC}). Thus, a fundamental question arises:

\begin{question}
Do the local   (un)stable manifolds given by Theorems \ref{ra}, \ref{rb}, and \ref{rc} exhibit sub-exponential growth in size along orbits?
\end{question}

To address the above problem, we seem to require a more nuanced discussion than the approach presented in this paper.

\subsection{High density on hyperbolic times with averaged domination}\label{be}

Theorem \ref{mg} demonstrates that,  in constructing unstable manifolds,
the averaged domination property at hyperbolic times can compensate for the absence of uniform domination. We will show below that the gap and positivity  of Lyapunov exponents can give rise to this property. 

As a first stage,  we investigate the abundance of hyperbolic times and the additivity of sub-additive sequences by increasing iterations.

Let $\Phi=\{\varphi_n\}_{n\in \NN}$ be a sequence of continuous functions that is sub-additive, i.e., for any $x\in M$, 
$$
\varphi_{n+m}(x)\le \varphi_n(x)+\varphi_{m}(f^n(x)),\quad \forall \, n,m\in \NN.
$$
Given $\gamma\in \RR$ and $\ell\in \NN$, for $x\in M$, we consider the set consisting of its hyperbolic times with respect to $\varphi_{\ell}$ and $\gamma$ as follows
$$
T_{\ell}(\Phi, x, \gamma)=\{n :\frac{1}{k\ell}\sum_{i=n-k}^{n-1}\varphi_{\ell}(f^{i\ell}(x))\le \gamma,\quad \forall\, 1\le k \le n\}.
$$

We have the following result, which asserts the abundance of hyperbolic times by considering large iteration.

\begin{lemma}\label{extd}
Let $\gamma_1<\gamma_2$ and $\Phi=\{\varphi_n\}_{n\in \NN}$ be a sequence of sub-additive continuous functions.  If $\mu$ is an invariant measure such that
$$
\lim_{n\to +\infty}\frac{\varphi_n(x)}{n}\le \gamma_1, \quad \mu\textrm{-a.e.}~x,
$$
then for any $\theta\in (0,1)$, there exists $\ell_0\in \NN$ such that for any $\ell\ge \ell_0$, we have 
$$
\mu\left(\{x: D_L(T_{\ell}(\Phi, x, \gamma_2))\ge \theta\}\right)>\theta.
$$
\end{lemma}

To prove Lemma \ref{extd}, we recall the next Pliss-Like Lemma given in \cite[Lemma A]{AV17}.

\begin{lemma}\label{plav}
Let $\eta<\zeta<L$. Then for any $\theta\in (0,1)$, there exists $\rho=\rho(L,\eta,\zeta,\theta)\in (0,1)$ such that if $\{a_i\}_{i=0}^{N-1}$ is a sequence of real numbers satisfying $a_i\le L$ for every $0\le i \le N-1$, and
$$
\frac{1}{N}\#\{0\le i \le N-1: a_i<\eta\}>\rho,
$$
then there exist $1\le n_1<n_1<\cdots <n_m\le N-1$ with $m\ge \theta N$ such that
$$
\frac{1}{k}\sum_{j=n_i-k}^{n_i-1}a_j<\zeta, \quad \forall \, 1\le k \le n_i, ~\quad \forall\, 1\le i \le m.
$$
\end{lemma}

Now we can give the proof of Lemma \ref{extd} as follows.

\begin{proof}[Proof of Lemma \ref{extd}]
Let 
$$
L=\max_{x\in M}\{\varphi_1(x)\},\quad \eta=\frac{1}{2}(\gamma_1+\gamma_2), \quad \zeta=\gamma_2.
$$
For any $\theta\in (0,1)$, let $\rho=\rho(L,\eta,\zeta, \theta)\in (0,1)$ be the constant given by Lemma \ref{plav}.

Define for each $\ell\in \NN$ the set
$$
B_{\ell}=\{x: \varphi_{\ell}(x)<\ell \eta\}.
$$
By assumption,
$$
\lim_{n\to +\infty}\frac{\varphi_n(x)}{n}\le \gamma_1<\eta<\gamma_2
$$
for $\mu$-almost every $x\in M$. Thus
there exists $\ell_0\in \NN$ such that 
\begin{equation}\label{first}
\mu(B_{\ell})>1-(1-\rho)(1- \theta), \quad \forall \,\ell\ge \ell_0.
\end{equation}
Now we fix any $\ell \ge \ell_0$. By Birkhoff's ergodic theorem we know that the limit
$$
\psi(x):=\lim_{n\to +\infty} \frac{1}{n}\sum_{j=0}^{n-1}\mathbbm{1}_{B_{\ell}}\left(f^{j\ell}(x)\right)
$$
exists for $x$ in a full $\mu$-measure subset $K$. Furthermore, together with (\ref{first}) we have 
\begin{equation}\label{sec}
\int \psi \,{\rm d}\mu=\mu(B_{\ell})>1-(1-\rho)(1-\theta).
\end{equation}
As a result, 
\begin{eqnarray*}
\mu(\{x\in K: 1-\psi(x)\ge 1-\rho\})&\le& \frac{1}{1-\rho}\int (1-\psi)\, {\rm d}\mu\\
&<& \frac{(1-\rho)(1-\theta)}{1-\rho}\\
&=&1-\theta.
\end{eqnarray*}
Let $X:=\{x\in K: \psi(x)>\rho\}$, we then get $\mu(X)>\theta$.
For any $x\in X$, take $n_x\in \NN$ such that 
\begin{equation}\label{rhob}
\frac{1}{N}\#\{0\le i \le N-1: f^{i\ell}(x)\in B_{\ell}\}>\rho,\quad \forall \, N\ge n_x.
\end{equation}
Let
$$
a_i=\frac{\varphi_{\ell}(f^{i\ell}(x))}{\ell},\quad 0\le i \le N-1.
$$
Then, with the definition of $B_{\ell}$, \eqref{rhob} suggests that 
$$
\frac{1}{N}\#\{0\le i \le N-1: a_i<\eta\}>\rho.
$$
By the sub-additivity and the choice of $L$, we have for every $0\le i \le N-1$ that
$$
a_i\le \frac{1}{\ell}\sum_{j=0}^{\ell-1}\varphi_1(f^{i\ell+j}(x))\le L.
$$
Therefore, by applying Lemma \ref{plav} one knows that 
there exist $1\le n_1<n_1<\cdots <n_m\le N-1$ with $m\ge \theta N$ such that
$$
\frac{1}{k\ell}\sum_{i=n_i-k}^{n_i-1}\varphi_{\ell}(f^{i\ell}(x))<\gamma_2, \quad \forall \, 1\le k \le n_i, ~\quad \forall\, 1\le i \le m.
$$
Hence, 
$$
\#(T_{\ell}(\Phi, x,\gamma_2)\cap \{0,\cdots,N-1\})\ge m \ge \theta N.
$$
Note that this holds for every $N\ge n_x$, and $\theta$ is independent of $N$, one concludes that 
$$
\liminf_{n\to +\infty}\frac{1}{n}\#\left(\{0,\cdots, n-1\}\cap T_{\ell}(\Phi, x, \gamma_2)\right)\ge \theta, \quad \forall \, x\in X.
$$
This completes the proof. 
\end{proof}

\medskip
Given $\gamma\in \RR$ and $\ell\in \NN$, consider the block defined as follows 
$$
H_{\ell}(\Phi,\gamma)=\{x: \frac{1}{n\ell}\sum_{i=0}^{n-1}\varphi_{\ell}(f^{i\ell}(x))\le \gamma,\quad \forall\, n\in \NN\}.
$$
We have the following result, which shows the additivity from sub-additivity, and is crucial for getting averaged domination. We mention that it has been appeared in previous works under more specific settings, such as \cite[Lemma A.2]{MC21} and \cite[Lemma 2.7]{CZZ23}. 

%

\begin{lemma}\label{toblock}
Let $\gamma_1<\gamma_2$ and $\Phi=\{\varphi_n\}_{n\in \NN}$ be a sequence of sub-additive continuous functions.  If $\mu$ is an invariant measure such that
$$
\lim_{n\to +\infty}\frac{\varphi_n(x)}{n}\le \gamma_1, \quad \mu\textrm{-a.e.}~x,
$$
then 
$$
\lim_{\ell \to +\infty}\mu(H_{\ell}(\Phi, \gamma_2))=1.
$$
\end{lemma}
\medskip
Now we will apply Lemma \ref{extd} and Lemma \ref{toblock} to $C^1$ diffeomorphism $f$ with two continuous invariant sub-bundles $E$ and $F$. 
Given $\ell\in \NN$, $\gamma_1, \gamma_2\in \RR$, denote 
\begin{align*}
& \quad \Lambda_{\ell}(\gamma_1,\gamma_2,E,F)=\Big\{x: \frac{1}{n\ell}\sum_{i=0}^{n-1}\log m\left(Df^{\ell}|_{E(f^{i\ell}(x))}\right)\ge \gamma_1,\\
 &\quad \quad \quad \quad \quad\quad\quad\quad~~\frac{1}{n\ell}\sum_{i=0}^{n-1}\log\|Df^{\ell}|_{F(f^{i\ell}(x))}\|\le \gamma_2, \quad \forall\, n\in \NN\Big\}.
\end{align*}

When associated to Lyapunov exponents, $\{\Lambda_{\ell}(\gamma_1,\gamma_2,E,F)\}_{\ell\in \NN}$ plays a role like ``Pesin blocks" suggested as follows. 

\begin{proposition}\label{tpos}
Let $f\in {\rm Diff}^1(M)$ with two continuous invariant sub-bundles $E$ and $F$, and $\mu$ be an ergodic measure. If $\gamma_1<\chi_E^-(\mu)$ and  $\gamma_2>\chi_F^+(\mu)$, then 
$$
\lim_{\ell\to +\infty}\mu(\Lambda_{\ell}(\gamma_1,\gamma_2,E,F))=1.
$$
\end{proposition}

\begin{proof}
Let us fix constants as in the assumption. For any $\ell\in \NN$, consider the subset defined by
$$
\Lambda_{\ell}^s(\gamma_2)=\{x: \frac{1}{n\ell}\sum_{i=0}^{n-1}\log\|Df^{\ell}|_{F(f^{i\ell}(x))}\|\le \gamma_2, \quad \forall\, n\in \NN\}.
$$
Now we show 
\begin{equation}\label{k}
 \lim_{\ell\to +\infty}\mu(\Lambda_{\ell}^s(\gamma_2))=1.
\end{equation}
Let 
$$
\varphi_n(x)=\log\|Df^{n}|_{F(x)}\|, \quad n\in \NN, \quad x\in M.
$$
We see from chain's rule that $\Phi=\{\varphi_n\}_{n\in \NN} $ is a sub-additive continuous sequence. 
Since 
$$
\lim_{n\to +\infty}\frac{\varphi_n(x)}{n}=\chi_F^+(\mu)<\gamma_2, 
$$
one can apply Lemma \ref{extd} to get that
$
\mu(H_{\ell}(\Phi,\gamma_2))\to 1
$
as $\ell \to +\infty$.
Note that $\Lambda_{\ell}^s(\gamma_2)=H_{\ell}(\Phi,\gamma_2)$, we arrive at the convergence (\ref{k}).

Now take $\zeta_n(x)=-\log m(Df^n|_{E(x)})$ for every $x\in M$ and $n\in \NN$. We know that $\{\zeta_n\}_{n\in \NN}$ is also a sub-additive continuous sequence, which satisfies
$$
\lim_{n\to +\infty}\frac{\zeta_n(x)}{n}=-\lim_{n\to +\infty}\frac{1}{n}\log m\left(Df^{n}|_{E(x)}\right)=-\chi_E^-(\mu)<-\gamma_1.
$$
By applying Lemma \ref{extd} once more to $\{\zeta_n\}_{n\in \NN}$, as above we obtain
\begin{equation}\label{jhj}
\lim_{\ell\to +\infty}\mu(\Lambda_{\ell}^u(\gamma_1))=1,
\end{equation}
where
$$
\Lambda_{\ell}^u(\gamma_1)=\{x: \frac{1}{n\ell}\sum_{i=0}^{n-1}\log m\left(Df^{\ell}|_{E(f^{i\ell}(x))}\right)\ge \gamma_1, \quad\forall \,n\in \NN\}.
$$
Combining (\ref{k}) and (\ref{jhj}), with the
observation $\Lambda_{\ell}(\gamma_1,\gamma_2,E,F)=\Lambda^u_{\ell}(\gamma_1)\cap \Lambda_{\ell}^s(\gamma_2)$,  we obtain
$$
\lim_{\ell\to+\infty}\mu(\Lambda_{\ell}(\gamma_1,\gamma_2,E,F))=1,
$$
which completes the proof. 

\end{proof}



Recalling the definition of $\Delta$ from \eqref{def:delta}, we have the following observation: for any $\ell\in\NN$, if $\gamma_1>\gamma_2$ then 
\begin{equation}\label{bta}
\Lambda_{\ell}(\gamma_1,\gamma_2,E,F)\subset \D({\rm e}^{(\gamma_1-\gamma_2)\ell}, f^{\ell},E,F).
\end{equation}
Therefore, Proposition \ref{tpos} suggests that the averaged domination may arise from the gap between Lyapunov exponents of sub-bundles. More precisely, we have
%
%
%
%
\begin{corollary}\label{td}
Let $f\in {\rm Diff}^1(M)$ with two continuous invariant sub-bundles $E$ and $F$.
If $\mu$ is an ergodic measure such that $\chi_E^-(\mu)>\chi_F^+(\mu)$, then 
$$
\lim_{\ell\to +\infty}\mu(\D({\rm e}^{(\gamma_1-\gamma_2)\ell},f^{\ell},E,F))=1
$$
whenever $\chi_E^-(\mu)>\gamma_1>\gamma_2>\chi_F^+(\mu)$.
\end{corollary}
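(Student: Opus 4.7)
The plan is to combine the two previously established ingredients in a purely formal way. First, I would observe that the hypothesis $\chi_E^-(\mu) > \gamma_1 > \gamma_2 > \chi_F^+(\mu)$ in particular gives $\gamma_1 > \gamma_2$, so Lemma \ref{ob} applies and yields the set inclusion
\begin{equation*}
\Lambda_\ell(\gamma_1, \gamma_2, f) \subset \D(\exp((\gamma_1-\gamma_2)\ell), f^\ell)
\end{equation*}
for every $\ell \in \NN$. By monotonicity of the probability measure $\mu$, this inclusion immediately upgrades to
\begin{equation*}
\mu\bigl(\D(\exp((\gamma_1-\gamma_2)\ell), f^\ell)\bigr) \;\ge\; \mu\bigl(\Lambda_\ell(\gamma_1, \gamma_2, f)\bigr).
\end{equation*}

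Next, since the same inequality $\chi_E^-(\mu) > \gamma_1 > \gamma_2 > \chi_F^+(\mu)$ also entails the gap condition $\chi_E^-(\mu) > \chi_F^+(\mu)$, Theorem \ref{tpos} applies directly to the right-hand side and yields $\mu(\Lambda_\ell(\gamma_1, \gamma_2, f)) \to 1$ as $\ell \to \infty$. Combining this with the inequality above and the trivial upper bound $\mu(\,\cdot\,) \le 1$ forces
\begin{equation*}
\lim_{\ell \to \infty}\mu\bigl(\D(\exp((\gamma_1-\gamma_2)\ell), f^\ell)\bigr) = 1,
\end{equation*}
which is the desired conclusion.

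There is no genuine obstacle in deducing the corollary itself; the argument is a two-line consequence of \emph{set inclusion} plus \emph{monotonicity}. All of the analytic content is concentrated in Theorem \ref{tpos}, where one must show that, by passing to a sufficiently large power $f^\ell$, the Birkhoff averages $\frac{1}{n\ell}\sum_{i=0}^{n-1}\log m(Df^\ell|_{E(f^{i\ell}x)})$ exceed $\gamma_1$ for all $n$ and $\frac{1}{n\ell}\sum_{i=0}^{n-1}\log\|Df^\ell|_{F(f^{i\ell}x)}\|$ stay below $\gamma_2$ for all $n$, simultaneously on a set of $\mu$-measure arbitrarily close to $1$. That is where the convergence $\frac{1}{\ell}\log m(Df^\ell|_{E(x)}) \to \chi_E^-(\mu)$ and $\frac{1}{\ell}\log\|Df^\ell|_{F(x)}\| \to \chi_F^+(\mu)$ (in a strong enough sense, e.g.\ in $L^1(\mu)$) must be combined with a maximal-type inequality to handle the ``for all $n$'' quantifier; but that work is already done in Theorem \ref{tpos}, so nothing more is needed here.
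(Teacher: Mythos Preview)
Your proposal is correct and matches the paper's approach exactly: the paper states the corollary as an immediate ``consequence of Theorem \ref{tpos} and Lemma \ref{ob}'' without writing out a separate proof, and your argument via the set inclusion from Lemma \ref{ob} plus monotonicity of $\mu$ and the limit from Theorem \ref{tpos} is precisely how one unpacks that one-line justification.
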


We will concentrate our discussion on points whose orbits admit averaged domination with high density on hyperbolic times. For $\gamma_1>\max\{0,\gamma_2\}$ and $\theta\in (0,1)$, we denote by $\Lambda_{\ell}(\gamma_1,\gamma_2,\theta,E,F)$ the set of points $x$ satisfying
$$
D_L(\mathcal{HD}\left(x,{\rm e}^ {\gamma_1 \ell}, {\rm e}^{(\gamma_1-\gamma_2)\ell},f^{\ell},E,F\right))\ge \theta,
$$
where $\mathcal{HD}$ is defined in Definition \ref{hdx}.



\begin{proposition}\label{sah}
Let $f\in {\rm Diff}^1(M)$ with two continuous invariant sub-bundles $E$ and $F$,
and $\mu$ be an ergodic measure satisfying $\chi_E^-(\mu)>\max\{0,\chi_F^+(\mu)\}$. Given any constants $\gamma_1,\gamma_2\in \RR$ such that
$$
\chi_E^{-}(\mu)>\gamma_1>\gamma_2>\chi_F^+(\mu)\quad \textrm{and}\quad \gamma_1>0,
$$
then for any $\theta\in (0,1)$, there exists $\ell_0\in \NN$ such that
$$
\mu(\Lambda_{\ell}(\gamma_1,\gamma_2,\theta,E,F))>\theta,\quad \forall \, \ell \ge \ell_0.
$$
\end{proposition}

\begin{proof}
Let
$$
\Phi=\{\varphi_n\}_{n\in \NN},\quad \varphi_n=-\log m(Df^n|_{E}),\quad n\in \NN.
$$
Then, $\Phi$ is a sequence of sub-additive continuous functions. Moreover, we have 
by assumption that
$$
\lim_{n\to +\infty}\frac{\varphi_n(x)}{n}=-\lim_{n\to +\infty}\frac{1}{n}\log m(Df^n|_{E(x)})=-\chi_E^{-}(\mu)<-\gamma_1
$$
for $\mu$-almost every $x\in M$. For any $\theta\in (0,1)$, by applying Lemma \ref{extd} to $\Phi$, there exists $\ell_0\in \NN$ such that 
\begin{equation}\label{oneden}
\mu\left(\{x: D_L(T_{\ell}(\Phi, x, -\gamma_1))\ge \frac{\theta+1}{2}>\theta\}\right)>\frac{\theta+1}{2},\quad \forall \, \ell\ge \ell_0.
\end{equation}
On the other hand, according to Proposition \ref{tpos}, up to increasing $\ell_0$, we have 
\begin{equation}\label{twobd}
\mu(\Lambda_{\ell}(\gamma_1,\gamma_2,E,F))>\frac{\theta+1}{2},\quad \forall \, \ell\ge \ell_0.
\end{equation}
Note that if $n\in T_{\ell}(\Phi, x, -\gamma_1)$, then
$$
\frac{1}{k\ell}\sum_{i=n-k}^{n-1}\log m(Df^{\ell}|_{E(f^{i\ell}(x))})\ge \gamma_1,\quad \forall \, 1\le k \le n.
$$
This means that $n$ is a $({\rm e}^{\gamma\ell},E)$-hyperbolic time (recall Definition \ref{hdx}) for $x$ with respect to $f^{\ell}$. 
Combined with the fact \eqref{bta}, we deduce that 
$$
\{x: D_L(T_{\ell}(\Phi, x, -\gamma_1))>\theta\}\cap \Lambda_{\ell}(\gamma_1,\gamma_2,E,F)\subset \Lambda_{\ell}(\gamma_1,\gamma_2,\theta,E,F).
$$
This together with \eqref{oneden} and \eqref{twobd} yields 
$$
\mu(\Lambda_{\ell}(\gamma_1,\gamma_2,\theta,E,F))>\theta, \quad \forall \, \ell\ge \ell_0.
$$
Now we complete the proof.
\end{proof}

\subsection{Proofs of Theorem \ref{ra} and Theorem \ref{rc} }\label{3.1}

Let us give the proof of Theorem \ref{ra} by applying Theorem \ref{mg} and Proposition \ref{sah}.

\begin{proof}[Proof of Theorem \ref{ra}]

Take any $f$-ergodic measure $\mu$ such that 
$$
\chi_E^-(\mu)>\max\{0, ~\chi_F^+(\mu)\}.
$$
For any $0<\varepsilon<\min\{\chi_E^-(\mu), \chi_E^-(\mu)-\chi_F^+(\mu)\}$, we choose 
$$
\gamma_1=\chi_E^-(\mu)-\varepsilon/2,\quad \gamma_2\in (\chi_F^+(\mu),\gamma_1).
$$
Thus $\gamma_1>0$.
By Proposition \ref{sah}, one can fix $\theta>0$ and $\ell\in \NN$ such that 
$$
\mu(\Lambda_{\ell}(\gamma_1,\gamma_2,\theta,E,F))>\theta>0.
$$ 
Note that $\mu$ may not be $f^{\ell}$-ergodic. However, there exists some $f^{\ell}$-ergodic measure $\nu$ such that 
$$
\mu=\frac{1}{\ell}\left(\nu+f_{\ast}\nu+\cdots +f_{\ast}^{\ell-1}\nu\right).
$$
Since $\mu(\Lambda_{\ell}(\gamma_1,\gamma_2,\theta,E,F))>0$, up to considering another $f_{\ast}^i\nu$, we can assume
$$
\nu(\Lambda_{\ell}(\gamma_1,\gamma_2,\theta,E,F))>\theta>0.
$$ 
Take a $\nu$-generic point $x\in\Lambda_{\ell}(\gamma_1,\gamma_2,\theta,E,F)$ with respect to $f^{\ell}$. Then we have
$$
D_U(\mathcal{HD}(x,\lambda_1,\lambda_2, f^{\ell},E,F))\ge D_L(\mathcal{HD}(x,\lambda_1,\lambda_2, f^{\ell},E,F))\ge \theta,
$$
where 
$$
\lambda_1={\rm e}^{(\chi_E^-(\mu)-\varepsilon/2) \ell}>1,\quad \lambda_2={\rm e}^{(\gamma_1-\gamma_2)\ell}>1.
$$
Therefore, $(f^{\ell},\nu,x)$ satisfies the assumption of Theorem \ref{mg}.
Take 
$$
\chi={\rm e}^{(\chi_E^-(\mu)-\varepsilon) \ell}.
$$
Then, we have $1<\chi<\lambda_1$ by construction.
%
Hence, there are Borel functions $\delta_1(x)$ and $T_1(x)$ such that for $\nu$-almost every $x\in M$, there is a local   unstable manifold $W^{E,u}_{\delta_1(x)}(x,f^{\ell})\subset W^{E,u}(x,f^{\ell})$ such that for any $y\in W^{E,u}_{\delta_1(x)}(x,f^{\ell})$, we have
$$
d\left(f^{-\ell n}(x),f^{-\ell n}(y)\right)\le T_1(x) \chi^{- n}d(x,y)
$$
for every $n\in \NN$.

Going back to $f$, and using the ergodicity of $\mu$, one can find new Borel functions $\delta(x)$ and $T(x)$ so that for $\mu$-almost $x\in M$, there exists a local   unstable manifold $W_{\delta(x)}^{E,u}(x,f)$ contained in its global   unstable manifold $W^{E,u}(x,f)$, and for any $y\in W_{\delta(x)}^{E,u}(x,f)$,
$$
d(f^{-n}(x),f^{-n}(y))\le T(x){\rm e}^{-\chi_E^-(\mu)n}{\rm e}^{\varepsilon n}d(x,y),~\forall \, n\in \NN.
$$
We get the desired result.
\end{proof}



We now prove Theorem \ref{rc}. Unlike the previous case, this requires constructing lower-dimensional   stable and unstable manifolds embedded within higher-dimensional ones.

\begin{proof}[Proof of Theorem \ref{rc}]
We only deal with the case of $\chi_F^{-}(\mu)>0$. With the same argument we can deduce the result of the case $\chi_F^{+}(\mu)<0$. The idea is to establish $W^{E\oplus F,u}$ and $W^{E,u}$ simultaneously, take advantage of the high density on hyperbolic times within averaged domination provided in Proposition \ref{sah}.

%
%
%
%
%
For $\varepsilon$ sufficiently small,
we  put
$$
\alpha_1=\chi_E^{-}(\mu)-\varepsilon,\quad \alpha_2=\chi_F^{+}(\mu)+\varepsilon,
$$
$$
\beta_1=\chi_{E\oplus F}^{-}(\mu)-\varepsilon,\quad \beta_2=\chi_G^{+}(\mu)+\varepsilon.
$$
For $\ell \in\NN$, we take
$$
\lambda_1={\rm e}^{\alpha_1\ell},\quad \lambda_2={\rm e}^{(\alpha_1-\alpha_2)\ell},\quad \kappa_1={\rm e}^{\beta_1\ell},\quad \kappa_2={\rm e}^{(\beta_1-\beta_2)\ell},
$$
$$
\Lambda_{\ell}=\Lambda_{\ell}(\alpha_1,\alpha_2,2/3, E, F)\cap \Lambda_{\ell}(\alpha_1,\alpha_2,2/3, E\oplus F,G).
$$
By applying Proposition \ref{sah} to 
$(E,F)$ and $(E\oplus F, G)$, there exists $\ell\in \NN$ such that 
$
\mu(\Lambda_{\ell})>0.
$
By construction, for every $x\in \Lambda_{\ell}$ we have
$$
D_L(\mathcal{HD}\left(x,\lambda_1,\lambda_2,f^{\ell},E,F\right))\ge 2/3,
$$
and
$$
D_L(\mathcal{HD}\left(x,\kappa_1,\kappa_2,f^{\ell},E\oplus F, G\right))\ge 2/3.
$$
As a consequence, we obtain
$$
D_L(\mathcal{HD}(x))>0,\quad x\in \Lambda_{\ell},
$$
by taking
$$
\mathcal{HD}(x)=\mathcal{HD}\left(x,\lambda_1,\lambda_2,f^{\ell},E,F\right)\cap \mathcal{HD}\left(x,\kappa_1,\kappa_2,f^{\ell},E\oplus F, G\right).
$$
Since $\mu(\Lambda_{\ell})>0$, as shown in the proof of Theorem \ref{ra} we can choose an ergodic component $\nu$ of $\mu$ with respect to $g=f^{\ell}$ such that $\mu=\frac{1}{n}\sum_{i=0}^{\ell-1}f_{\ast}^i\nu$ and
$
\nu(\Lambda_{\ell})>0.
$

Fix a $\nu$-generic point $x_0\in \Lambda_{\ell}$. For $\epsilon>0$ small enough, denote by $B^{E\oplus F}(0,\epsilon)$ and $B^E(0,\epsilon)$ the balls centered at the origin of size $\epsilon$ in $E(x)\oplus F(x)$ and $E(x)$ respectively. We take exponential images of these two balls as follows: 
$$
\widehat{D}={\rm exp}_{x_0}(B^{E\oplus F}(0,\epsilon)),\quad D={\rm exp}_{x_0}(B^E(0,\epsilon)).
$$
Then for any $a$ sufficiently small, one can choose $\epsilon$ small enough such that $\widehat{D}$ is tangent to $\mathscr{C}_a^{E\oplus F}$, and for every $x\in D$ and $v\in T_xD$, we have
$$
\|v_F\|\le a\|v_E\|,\quad v=v_E+ v_F+v_G\in E(x)\oplus F(x)\oplus G(x).
$$
By this construction, we see $\widehat{D}$ is close to $E\oplus F$ and $D$ is close to $E$ as long as $a$ is small enough.

Observe that any $n\in \mathcal{HD}(x_0)$ is a $(\kappa_1, E\oplus F)$-hyperbolic time for $x_0$, and 
$(x_0, g^{n}(x_0))$ is $\kappa_2$-average dominated associated to $(E\oplus F, G)$. Thus, proceeding as in the proof of Theorem \ref{mg}, there is a subset $H_{\infty}\subset M$ satisfying $\nu(H_{\infty})>0$ and for any $x\in H_{\infty}$, there exists a subsequence $\{n_i^x\}\subset \mathcal{HD}(x_0)$ and disks $\widehat{D}_{n_i^x}\subset g^{n_i^x}(\widehat{D})$ of uniform size such that 
\begin{itemize}
\item $$g^{n_i^x}(x_0)\to x.$$ 
\smallskip
\item $\widehat{D}_{n_i^x}$ converges to the local   unstable manifold $W^{E\oplus F,u}_{loc}(x,g)$ tangent to $E\oplus F$, as $i\to +\infty$.
\end{itemize}
As any $n\in \mathcal{HD}(x_0)$ is also a $(\lambda_1,E)$-hyperbolic time and
$(x_0, g^{n}(x_0))$ is $\lambda_2$-average dominated associated to $(E,F)$, which implies the backward contraction of $g^{n}(D)$ on size and the distance between $E$, as stated in Lemma \ref{aver}. Consequently, for every $x\in H_{\infty}$, up to considering subsequences of $\{n_i^x\}$ given above, we can find disks $D_{n_i^x} \subset \widehat{D}_{n_i^x}$ that converges to the local  unstable manifold $W^{E,u}_{loc}(x,g)$ tangent to $E$. Together with the convergence $\widehat{D}_{n_i^x}\to W^{E\oplus F,u}_{loc}(x,g)$, we see $W^{E,u}_{loc}(x,g)\subset W^{E\oplus F,u}_{loc}(x,g)$. By the ergodicity of $\nu$ with respect to $f^{\ell}$, one then obtains these (local)   unstable manifolds within the inclusion property at $\nu$-almost every point.

%

Since $\mu=\frac{1}{n}\sum_{i=0}^{\ell-1}f_{\ast}^i\nu$, going back to $f$ with the ergodicity of $\mu$, one gets that for $\mu$-almost every $x\in M$, there exist local   unstable manifolds $W_{loc}^{E\oplus F,u}(x,f)$ and $W_{loc}^{E,u}(x,f)$, along with their corresponding global   unstable manifolds $W^{E\oplus F,u}(x,f)$ and $W^{E,u}(x,f)$. Moreover, 
$$
W^{E,u}(x,f)\subset W^{E\oplus F,u}(x,f),\quad \mu\textrm{-a.e.}~x.
$$
\end{proof}

\bigskip
{\bf{Acknowledgements.}} We thank the anonymous referees who helped us to improve the presentation of this paper.

\bibliographystyle{amsplain}

\end{document}